\numberwithin{equation}{section}
\theoremstyle{plain}
\newtheorem{theorem}{\indent\rm T\,h\,e\,o\,r\,e\,m\;}[section]
\theoremstyle{definition}
\newtheorem{definition}[theorem]{\indent\rm D\,e\,f\,i\,n\,i\,t\,i\,o\,n\;}
\theoremstyle{remark}
\newtheorem{remark}[theorem]{\indent\rm R\,e\,m\,a\,r\,k\;}
\theoremstyle{exam}
\newtheorem{exam}[theorem]{\indent\rm E\,x\,a\,m\,p\,l\;e\;}
\renewenvironment{proof}{\indent\rm P\,r\,o\,o\,f.\;}{\hfill $\square$ \\ \indent}
\renewcommand*{\@seccntformat}[1]{
  \csname the#1\endcsname\;-                                   %
}                                                              %
\renewcommand{\section}{\@startsection{section}{1}{0mm}        %
   {1.5\baselineskip}
   {1\baselineskip}
   {\indent\normalfont\normalsize\bfseries}
   }                                                           %
\renewcommand*{\@seccntformat}[1]{
  \normalfont\bfseries\csname the#1\endcsname\;-               %
}                                                              %
\renewcommand\subsection{\@startsection                        %
  {subsection}{2}{0mm}
  {1.5\baselineskip}
  {1\baselineskip}
  {\indent\normalfont\normalsize\itshape}}
\renewcommand*{\@seccntformat}[1]{
  \normalfont\bfseries\csname the#1\endcsname\;-               %
}                                                              %
\renewcommand\subsubsection{\@startsection                     %
  {subsubsection}{2}{0mm}
  {1.5\baselineskip}
  {1\baselineskip}
  {\indent\normalfont\normalsize\texttt}}
\begin{document}
\thispagestyle{empty}


\centerline{\large{\textbf{On the concept of center for geometric objects and related problems}}}

\begin{center}
{\sc M. Magdalena Martínez-Rico}, \
{\sc L. Felipe Prieto-Martínez}  \ {\small and}  \
{\sc R. Sánchez-Cauce}
\end{center}
\vspace {1.1cm}

\renewcommand{\thefootnote}{\fnsymbol{footnote}}


\renewcommand{\thefootnote}{\arabic{footnote}}
\setcounter{footnote}{0}

\vspace{0.6cm}
\begin{center}
\begin{minipage}[t]{11cm}
\small{
\noindent \textbf{Abstract.}
 In this work, we review the concept of center of a geometric object as an equivariant map, unifying and generalizing different approaches followed by authors such as C. Kimberling or A. Edmonds. We provide examples to illustrate that this general approach encompasses many interesting spaces of geometric objects arising from different settings. Additionally, we discuss two results that characterize centers for some particular spaces of geometric objects, and we pose five open questions related to the generalization of these characterizations to other spaces. Finally, we conclude this article by briefly discussing other \emph{central objects} and their relation to this concept of center.
\medskip

\noindent \textbf{Keywords.}
Center, Equivariant map, Kimberling's centers of triangles, Center of a Polygon, Centroid.
\medskip

\noindent \textbf{Mathematics~Subject~Classification:}
 51M04, 51M15.

}
\end{minipage}
\end{center}

\bigskip

\section{Introduction.}

In Geometry, the word \emph{center} is sometimes used  without rigorously specifying its meaning. What is the \emph{center} of a finite set of points in the plane? What is the \emph{geographical center} of a country?

For any integer $n\geq 1$, let $S(n)$ denote the group of similarities of $\mathbb R^n$ (with respect to the composition) and let us denote its identity element by $I$.  Let us recall, at this moment, the definition of group action. We say that $S(n)$ \textbf{acts} on a given set $\mathcal O$ if there is an operation $S(n)\times \mathcal O\to \mathcal O$ (we use, for $T\in S(n)$ and $O\in\mathcal O$, the notation $T(O)$ for this operation) satisfying that (1) $\forall O\in \mathcal O$ , $I(O)=O$ and (2)  $\forall T,S\in S(n)$, $\forall O\in \mathcal O$, $(T\circ S)(O)=T(S(O))$.

Let us propose the following formal definition of \emph{center}:

\begin{definition}  Let $\mathcal O$ be a set such that $S(n)$ acts on $\mathcal O$. A \textbf{center} is a  function $\mathfrak X:\mathcal O\to\mathbb R^n$ which is \textbf{equivariant}, that is, it conmutes with the action of $S(n)$: 
$$\forall T\in S(n),\;\forall O\in \mathcal O,\;T(\mathfrak{X}(O))=\mathfrak X(T(O)).$$

\noindent In this context, we say that $\mathcal O$ is the \textbf{space of geometric objects} of the center (so a center is a function that assign a point to each geometric object).

\end{definition}

The following example may illustrate which is the relation between what we expected for a concept of  \emph{center of a given geometric object} and equivariant maps.

\begin{exam} \label{exam} Let $\mathcal P_3'$ be the set of triangles in the plane (it will play the role of the space of geometric objects). This set can be identified with the set of subsets of $\mathbb R^2$ of size 3 whose elements are all of them distinct and non-collinear. 

For every element $\{A,B,C\}$ in $\mathcal P_3'$, there are four points in the plane which are remarkable. They are the centroid, the incenter, the circumcenter and the orthocenter. They were already familiar to the ancient Greeks and can be obtained with very simple constructions.

Note that, if $\{A,B,C\}$ and $\{A',B',C'\}$ are two similar triangles, $T$ is the similarity transformation satisfying $A'=T(A)$, $B'=T(B)$, $C'=T(C)$ and $\mathfrak C(\{A,B,C\})$ denotes the centroid (equiv. for the incenter, the circumcenter and the orthocenter), then $\mathfrak C(\{A',B',C'\})=T(\mathfrak C(\{A,B,C\}))$.

\end{exam}

The previous example may convince the reader of the pertinence of describing centers as equivariant maps and of the necessity of this property to achieve a coherent definition. But why a general theory should be needed? 

\begin{itemize}
\item C. Kimberling's studied in several papers (see \cite{K1,K2}, for instance) the concept of \emph{center} for the space of triangles in the plane. In his \emph{Encyclopedia of Triangle Centers} \cite{K} (more will be said about the author and the \emph{Encyclopedia} later) there is a list of 59421 different triangle centers. 

\item A. Edmonds studied the concept of center for the space of simplices in $\mathbb R^n$ (see \cite{E}). 

\item In \cite{VT} the reader may find a lot of material of what the author calls \emph{quadri-figures}.

\item In \cite{PS} centers of polygons are studied. 

\end{itemize}

\noindent All of these authors  (and many others that, in all likelihood, also deserve to appear in this list) independently studied centers for a particular choice of the space of geometric objects. The main goal of this paper is to show that, with the above unifying definition of center, there are some properties and problems common to every possible choice of the space of geometric objects.

\begin{remark} Note that, in the definition of center, we have not imposed any axiom of continuity for the function $\mathfrak X$. This is done by some other authors, such as A. Edmonds in \cite{E}. We will talk about this later.

\end{remark}

This paper is structured as follows. In the following section, it is justified, by showing several examples, that the general approach presented above for the concept of center applies to many interesting spaces of geometric objects (arising from Pure and Applied Mathematics). This section is basic for the rest of the paper.  In Sections \ref{fq} and \ref{sq} the reader may find a discussion concerning two general questions of this theory of centers, that have already been explored for some particular cases of the space of geometric objects (typically, for the set of triangles) but are still open for some others.  

\begin{itemize}

\item The first one deals with the existence of {a} simple expression for the centers. This type of expressions are well known for triangles (we refer, again, to Kimberling's \emph{Encyclopedia}). For example, the centroid of a triangle $\{A,B,C\}$ can be obtained as
$$\mathfrak C(\{A,B,C\})=\frac{1}{3}A+\frac{1}{3}B+\frac{1}{3}C. $$

\item The second, deals with coincidence of centers. For a fixed geometric object  $O\in\mathcal O$, there is a relation between coincidence of centers and the group of symmetries  of $O$. For example, we have the following well known result, which is part of a principle that can be, in some sense, generalized for any other space of geometric objects:

\end{itemize}

\begin{theorem}[see \cite{F,I}] \label{theorem.es} For the space of triangles $\mathcal P_3'$ described before:

\begin{itemize} 

\item[(a)] The incenter and the orthocenter coincide if and only if the triangle is equilateral.

\item[(b)] The incenter, centroid and orthocenter are collinear if and only if the triangle is isosceles.

\end{itemize}


\end{theorem}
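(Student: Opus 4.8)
The plan is to prove each forward implication using the equivariance principle that motivates the paper, and each reverse implication by an explicit computation in barycentric coordinates.

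For the two ``if'' directions I would exploit symmetry directly. If the triangle $\{A,B,C\}$ is equilateral, it admits a rotation $R\in S(2)$ of angle $2\pi/3$ about its centroid with $R(\{A,B,C\})=\{A,B,C\}$. For any center $\mathfrak X$, equivariance gives $R(\mathfrak X(\{A,B,C\}))=\mathfrak X(R(\{A,B,C\}))=\mathfrak X(\{A,B,C\})$, so $\mathfrak X(\{A,B,C\})$ is a fixed point of $R$; since the only fixed point of a nontrivial rotation is its center, every center coincides with the centroid. In particular the incenter and orthocenter coincide, which settles the ``if'' part of (a). The same idea handles the ``if'' part of (b): an isosceles triangle, say with $AB=AC$, is invariant under the reflection $\sigma\in S(2)$ across the perpendicular bisector of $BC$, and equivariance forces $\sigma(\mathfrak X(\{A,B,C\}))=\mathfrak X(\{A,B,C\})$ for every center, so every center lies on the axis of $\sigma$; hence the incenter, centroid and orthocenter are collinear.

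For the ``only if'' direction of (a) I would argue with the defining loci. Writing $P$ for the common point incenter $=$ orthocenter, the line $AP$ is simultaneously the internal bisector at $A$ (because $P$ is the incenter) and the altitude from $A$ (because $P$ is the orthocenter); as $P$ is interior we have $P\neq A$, so these two cevians coincide. The bisector and altitude from $A$ agree precisely when $b=c$; repeating the argument at $B$ gives $a=c$, whence $a=b=c$ and the triangle is equilateral. Alternatively, using barycentric coordinates with the incenter $(a:b:c)=(\sin A:\sin B:\sin C)$ and the orthocenter $(\tan A:\tan B:\tan C)$, equating the two forces $\cos A=\cos B=\cos C$, i.e. $A=B=C$.

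The substantive part, which I expect to be the main obstacle, is the ``only if'' direction of (b). Here I would use that three points are collinear exactly when the determinant of their barycentric coordinate rows vanishes, so the incenter $(a:b:c)$, centroid $(1:1:1)$ and orthocenter $(1/S_A:1/S_B:1/S_C)$, where $S_A=\tfrac12(b^2+c^2-a^2)$ and cyclically, are collinear if and only if
$$\tilde D:=(b-a)\,S_AS_B+(a-c)\,S_AS_C+(c-b)\,S_BS_C=0.$$
A short check shows $\tilde D$ changes sign under each transposition of $a,b,c$, hence vanishes on each diagonal and is divisible by the Vandermonde factor, giving $\tilde D=(a-b)(b-c)(c-a)\,Q(a,b,c)$ with $Q$ a symmetric quadratic. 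This already yields $\tilde D=0$ whenever two sides are equal, and reduces the converse to proving that $Q(a,b,c)\neq 0$ for every non-degenerate, non-equilateral triangle. The expected difficulty lies exactly in controlling $Q$: one must expand it explicitly, for instance via the factorizations $S_AS_B=\tfrac14\bigl(c^4-(a^2-b^2)^2\bigr)$ and its cyclic analogues, and then verify through the triangle inequalities that $Q$ cannot vanish on the admissible region unless two sides coincide. Finally, the degenerate cases $S_AS_BS_C=0$ (right triangles, where the orthocenter lies at a vertex) must be treated separately to complete the argument.
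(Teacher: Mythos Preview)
The paper does not actually prove Theorem~\ref{theorem.es}; it is quoted as a known result with the citation ``see~\cite{F,I}'' and is used only as motivation for the later discussion. So there is no proof in the paper to compare against, and your proposal should be judged on its own terms.

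Your argument is essentially sound. The two ``if'' directions via equivariance are clean and correct, and your ``only if'' for~(a) is fine (either the cevian argument or the barycentric one works). For~(b), your setup is right, but you stop short at exactly the point where the computation finishes nicely: the quantity you call $Q$ is not a mysterious symmetric quadratic that might vanish on the interior of the triangle region---it is simply
\[
\tilde D \;=\; \tfrac12\,(a-b)(b-c)(c-a)\,(a+b+c)^2,
\]
so $Q(a,b,c)=\tfrac12(a+b+c)^2>0$ for every genuine triangle. One way to see this quickly is to write $4S_AS_B=c^4-(a^2-b^2)^2$ and cyclically, split $4\tilde D$ into the two alternating sums
\[
\bigl[(b-a)c^4+(a-c)b^4+(c-b)a^4\bigr]\;+\;\bigl[(a-b)^3(a+b)^2+(c-a)^3(c+a)^2+(b-c)^3(b+c)^2\bigr],
\]
and factor each separately; the first equals $(a-b)(b-c)(c-a)\bigl(a^2+b^2+c^2+ab+bc+ca\bigr)$ and the second equals $(a-b)(b-c)(c-a)\bigl(a^2+b^2+c^2+3(ab+bc+ca)\bigr)$, and their sum collapses to $2(a-b)(b-c)(c-a)(a+b+c)^2$. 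Thus $\tilde D=0$ forces two sides equal, with no appeal to the triangle inequalities at all.

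Your separate worry about right triangles is also unnecessary: if you take the orthocenter in the form $(S_BS_C:S_CS_A:S_AS_B)$ rather than $(1/S_A:1/S_B:1/S_C)$, the very same determinant (your $\tilde D$, up to sign) is valid for all triangles, and the factorization above covers the right-angled case automatically.
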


\noindent  Finally, in the last section, we briefly discuss the possibility of describing other \emph{central objects}, that is, equivariant maps whose domain is, again, a space of geometric objects, but whose codomain is other space (endowed with a $S(n)$-action).

\section{Some examples.}

In the following, we may find some examples of centers. Each subsection studies a different choice of the space of geometric objects: multisets, polygons, Borel sets and others.

\subsection{Centers of multisets.}

For this subsection, for $n,m\geq 1$, let $\mathcal M_m$ denote the set of m-submultisets of  $\mathbb R^n$ (multisets of elements in $\mathbb R^n$ of size $m$).

The most simple center that can be defined for this space of geometric objects is the following:

\begin{exam} \label{example.centroid} The  \textbf{centroid} $\mathfrak C:\mathcal M_m\to\mathbb R^n$ (we will always use $\mathfrak C$ to denote the centroid, even for different choices of the space of geometric objects) is defined as
$$\mathfrak C(\{X_1,X_2,\ldots, X_m\})=\frac{1}{m}X_1+\frac{1}{m}X_2+\ldots+\frac{1}{m}X_m.$$

\end{exam}

Note that, for some authors, the centroid of a $m$-multiset $O$ is called \emph{the center of $O$}, but there are many other interesting centers for this space of geometric objects. Some examples can be motivated, in the case $n=2$, from \emph{facility location problems}. These are classic optimization problem whose statement is similar to the following: \emph{determine the best location for a factory or warehouse to be placed, based on transportation distances, provided the customers locations.}

\begin{exam} For every $O\in\mathcal M_m$, let us define $B_O$ to be the smallest $n$-dimensional ball containing $O$. The problem of finding this $B_O$ (provided $O$) is  known as the \textbf{Smallest Enclosing Ball Problem} and it can be proved  straightforward that has a unique solution (we refer the reader to \cite{W} for more information). We define the \textbf{1-center} to be the equivariant map $\mathfrak F:\mathcal M_m\to\mathbb R^n$ such that, for each $O\in \mathcal M_m$, $\mathfrak F(O)$ is the center of $B_O$. In the context of facility location problems, in the case $n=2$, for a set of customers placed in the points in $O$, $\mathfrak F(O)$ is the optimal point to place a factory if we want to minimize the longest distance from the facility to the customers.

\end{exam}

\begin{exam} \label{example.medoid} For every $\{X_1,\ldots,X_m\}\in\mathcal M_m$, let us consider the following optimization problem:
$$\min_{P\in\mathbb R^n}\sum_{i=1}^m\|P-X_i\|. $$

\noindent It is known that for non-adversarially constructed data, the solution of the problem above is almost certainly unique (see \cite{BT}). Let us denote by $\mathcal M_m'$ to be the subset of $\mathcal M_m$ consisting in these elements for which the solution is unique. We define the \textbf{medoid} $\mathfrak M:\mathcal M_m'\to\mathbb R^n$ to be the map given by
$$\mathfrak M(\{X_1,\ldots, X_m\})=\min_{P\in\mathbb R^n}\sum_{i=1}^m\|P-X_i\|. $$

\noindent In the context of facility location problems in the case $n=2$, for a set of customers placed in the points in $O$, $\mathfrak M(O)$ is the optimal point to place a factory if we want to minimize the average distance from the facility to the customers.

\end{exam}

\subsection{Centers of polygons.} 

For this subsection, $n=2$ and $\mathcal P_m$ will denote the set of polygons with $m$ vertices, for $m\geq 3$. The elements in $\mathcal P_m$ cannot be identified with multisets, because the information of which vertex is adjacent to which other matters, in this case. A possibility is to use sequences $(X_1,\ldots, X_m)\in(\mathbb R^2)^m$, provided that

\begin{itemize}

\item[(1)] in this notation, for every $i=1,\ldots,i-1$, $X_i$ is adjacent to $X_{i+1}$ and $X_m$ is adjacent to $X_1$, and

\item[(2)]  two sequences $(X_1,\ldots, X_m)$, $(X_1',\ldots, X_m')$ \emph{represent the same polygon} (with a different labelling of the vertices) if there exists a permutation of $\alpha$ such that, for all $i=1,\ldots, m$,  $X_i'=X_{\alpha(i)}$ and such that $\alpha$ is in the group generated by the permutations $\rho,\sigma$ of $\{1,\ldots, m\}$, where, for all $i\in\{1,\ldots, m\}$,
$$\rho(i)=i+1\mod m,\qquad \sigma(i)=m-i.$$

\end{itemize}

\noindent In other words, $\mathcal P_m$ is a quotient set of $(\mathbb R^2)^m$ and its elements will be denoted as $[(X_1,\ldots, X_m)]$. In this notation, the \textbf{sides} of a polygon identified with the sequence $(X_1,\ldots, X_m)$ are those segments with endpoints $X_{i}$, $X_{i+1}$, for $i=1,\ldots, m$, considering subscripts modulo $m$.

The definition of polygon appearing above is the one appearing in \cite{BS} (in this book the term $m$-gon is used, instead) and in \cite{PS}. It is very general and includes pathological examples of polygons. There are some subsets of $\mathcal P_m$ which may be more suitable to be chosen as the space of polygons. For instante, let us define $\mathcal P_m'\subset \mathcal P_m$ to be the set of polygons whose sidelengths are strictly greater than 0 and that are \textbf{simple}, that is, the sides do not intersect.

\begin{exam} Note that the {centroid} can also be defined in this setting as
$$\mathfrak C(\{X_1,\ldots, X_m\})=\frac{1}{m} X_1,\ldots,\frac{1}{m}X_m.$$

\end{exam}

Again, this is not the only possible center of interest. Some of them can be found in \cite{PS} but let us, at least, introduce the following two articifial academic examples:

\begin{exam}  Let $\mathfrak S:\mathcal P_m'\to \mathbb R^2$ defined by 
$$\mathfrak S(X_1,\ldots, X_m)=\frac{d_{1,m}+d_{1,2}}{2p}X_1+\frac{d_{21}+d_{23}}{2p}X_2+\ldots+\frac{d_{m,m-1}+d_{m1}}{2p}X_m, $$

\noindent where $d_{i,j}$ denotes the distance between $X_i,X_j$ and $p$ denotes the perimeter of $(X_1,\ldots,X_m)$ (note that the definition does not depend on the labelling of the vertices).

\end{exam}

\begin{exam} Let $\mathfrak A:\mathcal P_m'\to \mathbb R^2$ defined by 
$$\mathfrak A(X_1,\ldots, X_m)=\frac{\alpha_1}{(m-2)\pi}X_1+\frac{\alpha_2}{(m-2)\pi}X_2+\ldots+\frac{\alpha_m}{(m-2)\pi}X_m, $$

\noindent where $\alpha_i$, for $i=1,\ldots, m$ denotes the value of the interior angle of $(X_1,\ldots, X_m)$ (note, again, that the definition does not depend on the labelling of the vertices).

\end{exam}

\subsection{Centers of Borel sets.} Let us recall that \textbf{Borel sets} are those sets in a topological space, $\mathbb R^n$ in the following, that can be obtained from open sets (or, equivalently, from closed sets) through the operations of countable union, countable intersection, and relative complement. Borel sets are $H^d$-measurable, where $H^d$ denotes the $d$-dimensional Hausdorff measure. Let $\mathcal B_d$ denote the set of Borel sets in $\mathbb R^n$ with positive and finite $d$-dimensional Hausdorff measure.

\begin{exam} \label{example.supercentroid}  In this setting, we can also define the centroid $\mathfrak C:\mathcal B_d\to\mathbb R^n$  by
$$\mathfrak C(O)=\left(\frac{\displaystyle{\int_Ox_1dH^d}}{\displaystyle{\int_OdH^d}},\ldots,\frac{\displaystyle{\int_Ox_ndH^d}}{\displaystyle{\int_OdH^d}}\right). $$

\end{exam}

Note that, for the case $d=0$, the Hausdorff measure $H^0$ of a given set $O$ is just its number of elements. So the formula for the centroid appearing in Example \ref{example.supercentroid} coincides with the one in Example \ref{example.centroid}.

In the case $d=n$, then Hausdorff measure is proportional to Lebesgue measure. In this particular case, many abuses of notation are commited in the literature. \emph{Geographical centers} are illustrative examples. The problem of determined such centers have received, historically, a lot of attention through the centuries. There are a lot of places claiming to be the center of a region, but the question is \emph{with respect to which definition?} At the beginning, the regions were considered to be plane, so the problem fits in the case $n=2$ of the setting above. Next, we show some examples of this:

\begin{itemize}

\item For United States, the \emph{geographic center} of area is defined as \emph{the point at which the surface of the United States would balance if it was a plane of uniform weight per unit of area} \cite{USA}. So, once the approximation with a plane region is made (which is not a trivial matter), this \emph{geographic center} coincides with the centroid as defined in Example \ref{example.supercentroid}. According to this definition, two historical geographic centers can be identified. The first is the geographic center of the conterminous United States, located near Lebanon, Kansas. This center was the official one from 1912 (with the admissions of New Mexico and Arizona to the 48 contiguous United States) to 1959 (with the admissions of Alaska and Hawaii). At this point, the geographic center moved to the north of Belle Fourche, South Dakota. This point is the one currently accepted as center of the United States by the United States Coast and Geodetic Survey and the U.S. National Geodetic Survey.

\item Officially, there is no center of Australia, because, as explained in \cite{Aust}, there is no unique accepted method to compute the center of a large, curved, irregularly-shaped area. Indeed, there are five possible center points of Australia: the \emph{center of gravity}, the \emph{Lambert gravitational center}, the \emph{furthest point from the coastline}, the \emph{geodetic median point}, and the \emph{Johnston Geodetic Station}. Although, the most plausible one seems to be the Lambert gravitational center, which was determined in 1988 by the Royal Geographical Society of Australasia.

\item Traditionally, the Cerro de los Ángeles, located in Getafe (Madrid), is considered the \emph{geographic center} of the Iberian Peninsula, although the criterion used to claim that is unclear. However, in the past, the town of Pinto (Madrid) held that title; in fact, it is believed that the name \emph{Pinto} comes from \emph{Punctum}, from the Latin \emph{point}, possibly named as such because it was considered the central territory of the peninsula by the romans. Recently, there has been a controversy about the peninsular geographic center, as some people argue that it is located in Méntrida (Toledo), not in Madrid.

\item Finally, we can consider the \emph{poles of inaccessibility} as other particular cases of centers. These points refer to locations that are the most challenging to reach according to a geographical criterion of inaccessibility \cite{G}. They are usually refered to the most distant points from the coastline, implying a \emph{maximum degree of continentality} or \emph{oceanity}, and they are defined as the centers of the largest circles that can be drawn within an area of interest without encountering a coast. One of the most famous poles of inaccessibility is the Oceanic pole of inaccessibility, also known as \textit{Point Nemo}, which is the place in the ocean that is farthest from land. Reciprocally, we find the continental pole of inaccessibility, which is the point farthest away from the coast, located in Eurasia, in the northwest of China.

\end{itemize}

For the case $n=2$, another interesting example is the one of the set $\mathcal B_1'$ of \textbf{rectificable Jordan curves} (simple and closed curves that have finite length, that is, are elements in $\mathcal B_1$) enclosing an element in $\mathcal B_2$. This case is, we could say, the limit case of $\mathcal P_m'$, for $m\to\infty$: we have not only sets, but sets with some extra structure. Note that, for some element $c\in\mathcal B_1'$ and for $B\in\mathcal B_2$ being its interior region, the centroid of $c$ {does not necessarily need to coincide} with the centroid of $B$ (see Figure 1).

\begin{figure}[h!]
\centering
\includegraphics[width=0.7\textwidth]{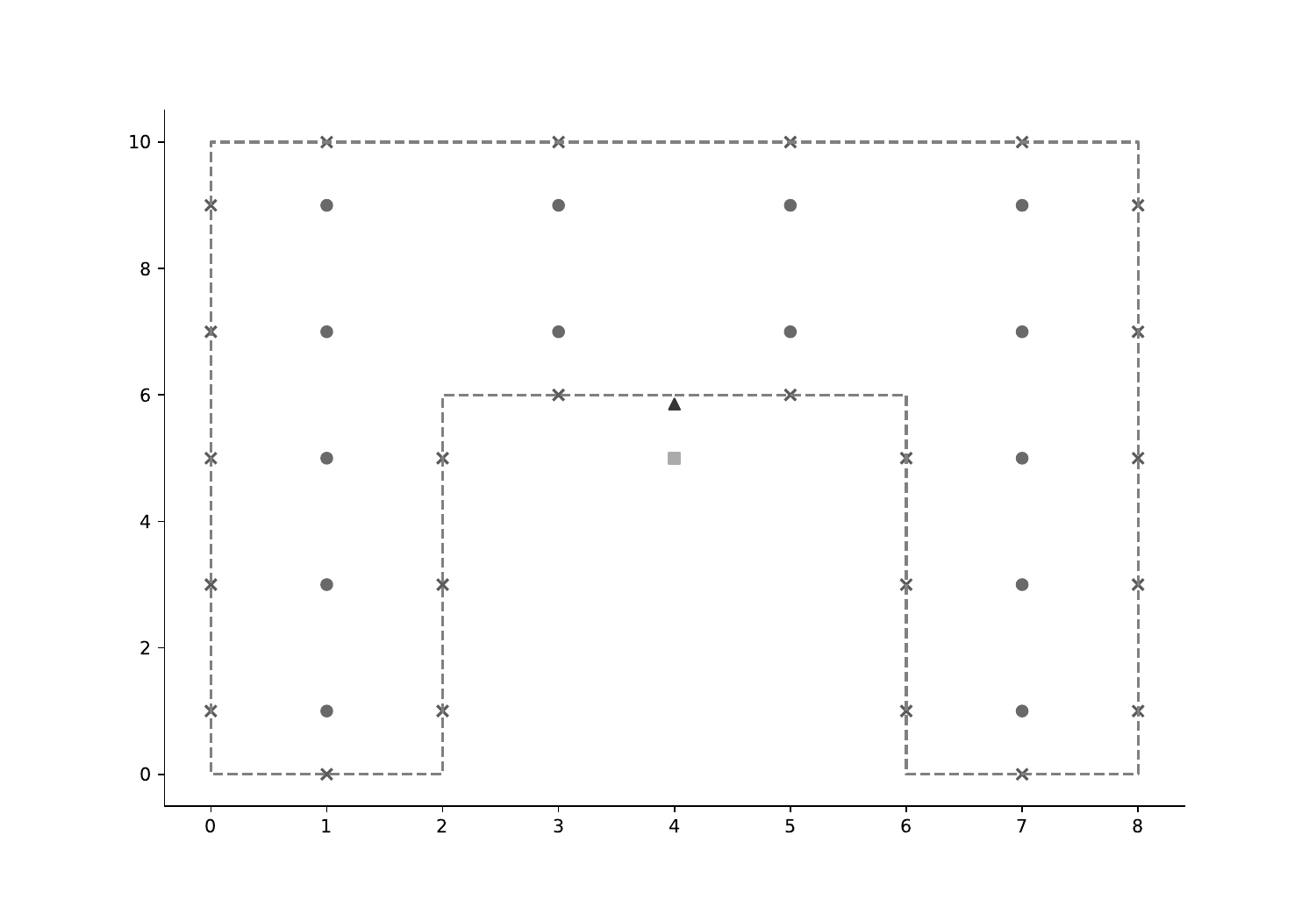}
\caption{A well known method for computing centroids is the one of \emph{geometric decomposition}. The centroid of the interior region (as a center of a Borel set) coincides with the centroid of the points represented with ($\bullet$) (as a center of a multiset). The centroid of the border (as a centroid of a Borel set) coincides with the centroid of the points represented with ``$\times$'' (as a centroid of the multiset). These centroids are marked with a triangle ($\blacktriangle$) and a square ($\blacksquare$), respectively, and do not coincide.}

\end{figure}

\subsection{Centers with other type of spaces of geometric objects.} {We finish this section showing a last example of center that appears, in particular, in the medical domain.}

\begin{exam} An interesting choice of the space of geometric objects $\mathcal O$, arising from Applied Mathematics, is the one consisting in those multisets of $m$ points in $\mathbb R^m$, such that each element has an associated weigth, that is, 
$$\mathcal O=\{\{(X_1,w_1),\ldots,(X_m,w_m):X_i\in\mathbb R^n, w_i\in\mathbb R_{>0}\}\}.$$

\noindent In this case, the \textbf{weighted centroid} $\mathfrak W:\mathcal A\to\mathbb R^n$ is defined as
$$\{(X_1,w_1),\ldots,(X_m,w_m)\}=\frac{w_1}{w_1+\ldots+w_m}X_1+\ldots+\frac{w_m}{w_1+\ldots+w_m}X_m. $$

\noindent This center appears, among many other applications, in problem related to the determination of the centroid of a tumor from its digital images (case $n=3$ or, in some experiments, $n=2$). The determination of this center is, in turn, useful to study cancer prognosis (based on the study of the glucose consumption of the cells \emph{near the center} and \emph{far from the center}). In this case, for some algorithms, the weigths are equal to a power of the inverse of time-of-flight within the breast associated to the pulse used for the image (see \cite{LGV} for more information).

\end{exam}

\section{First Question.} \label{fq}

Note that, in most of the examples appearing in the previous section, centers have a \emph{simple expression}. 


Let us start with the space of geometric objects $\mathcal P_3'$, described in Example \ref{exam}. C. Kimberling noticed that every center $\mathfrak X:\mathcal P'_3\to\mathbb R^2$ admits an expression 
$$\mathfrak X(\{A,B,C\})=g_1A+g_2B+g_3C,$$

\noindent where $g_1,g_2,g_3$ are certain functions depending on $A,B,C$ satisfying some properties of symmetry. The situation is summarized in the following result. To simplify the statement, let us explain at this moment that, for us, a function is \textbf{positively homogeneous} if when all its arguments are multiplied by a positive scalar, then its value is multiplied by some power of this scalar. Also, a function $g$ is  \textbf{invariant with respect to isometries} if, for every isometry $T$, we have that $g=g\circ T$ (provided that there is a natural action of the group of isometries on the domain of $g$).

\begin{theorem} \label{theorem.expression} A function $\mathfrak X:\mathcal P_3'\to\mathbb R^2$ is a center if and only if it satisfies any (and then all of them) of the following equivalent properties:

\begin{itemize}

\item[(a)] Let $\mathcal D=\{(a,b,c)\in\mathbb R^3: a< b+c,b<a+c,c<a+b)\}$, then there exists a  function  $g:\mathcal D\to\mathbb R$  such that, $\forall\{A,B,C\}\in\mathcal A'$, $\mathfrak X(\{A,B,C\})$ equals to
\begin{equation}\label{eq.t1}\textstyle{\frac{1}{g(a,b,c)+g(b,c,a)+g(c,a,b)}}(g(a,b,c)A+g(b,c,a)B+g(c,a,b)C), \end{equation} 

\noindent where $a$ denotes the length $BC$, $b$ denotes the length $AC$ and $c$ denotes the length $AB$, and satisfies the following properties: it is positively homogeneous and it satisfies the symmetry $g(a,b,c)=g(a,c,b)$, for all $(a,b,c)\in\mathcal D$.

\item[(b)] Let $\mathcal D=\{(A,B,C):\{A,B,C\}\in\mathcal P_3'\}$, then there exists a  function  $g:\mathcal D\to\mathbb R$  such that, $\forall\{A,B,C\}\in\mathcal P_3'$, $\mathfrak X(\{A,B,C\})$ equals to
\begin{equation}\label{eq.t2}\textstyle{\frac{1}{g(A,B,C)+g(B,C,A)+g(C,A,B)}}(g(A,B,C)A+g(B,C,A)B+g(C,A,B)C)\end{equation}

\noindent  and satisfies the following properties: it is positively homogeneous, it is invariant with respect to isometries and it satisfies the symmetry $g(A,B,C)=g(A,C,B)$, for all $(A,B,C)\in\mathcal D$.

\item[(c)] Let $\mathcal D=\{(X,O):O\in\mathcal P_3',X\in O\}$ (sets in $\mathcal P_3'$ with a distinguished element), then there exists a  function  $g:\mathcal D\to\mathbb R$  such that, $\forall\{A,B,C\}\in\mathcal P_3'$, $\mathfrak X(\{A,B,C\})$ equals to
\begin{equation} \label{eq.t3} \textstyle{\frac{1}{g(A,O)+g(B,O)+g(C,O)}}(g(A,O)A+g(B,O)B+g(C,O)C)\end{equation}

\noindent  and satisfies the following properties: it is postively homogeneous and it is invariant with respect to isometries.

\end{itemize}

\end{theorem}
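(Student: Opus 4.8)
The plan is to prove the chain of equivalences by treating (c) as the hub: I will show that $\mathfrak X$ is a center if and only if it admits a representation of the form (\ref{eq.t3}), and then argue that (a), (b), (c) merely encode the same weight function in three parametrizations of a labelled triangle. The structural fact driving everything is that each similarity $T\in S(2)$ is an affine map, $T(x)=\mu Ux+v$ with $\mu>0$, $U$ orthogonal and $v\in\mathbb R^2$, and affine maps preserve normalized affine combinations: if $\sum_i\lambda_i=1$, then $T(\sum_i\lambda_i P_i)=\sum_i\lambda_i T(P_i)$. For the direction center $\Rightarrow$ (c), given $O=\{A,B,C\}$ the vertices are non-collinear, hence affinely independent, so $\mathfrak X(O)$ has unique barycentric coordinates $\mathfrak X(O)=\lambda_A A+\lambda_B B+\lambda_C C$ with $\lambda_A+\lambda_B+\lambda_C=1$. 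I would set $g(A,O):=\lambda_A$, a well-defined function of the pair (distinguished vertex, triangle). Equivariance and the affine identity give, for any $T\in S(2)$,
$$\mathfrak X(T(O))=T(\mathfrak X(O))=\lambda_A T(A)+\lambda_B T(B)+\lambda_C T(C),$$
and uniqueness of barycentric coordinates for $T(O)$ forces $g(T(A),T(O))=\lambda_A=g(A,O)$. Thus $g$ is invariant under all of $S(2)$; in particular it is invariant under isometries, and since the homothety $x\mapsto\mu x$ lies in $S(2)$ it is positively homogeneous of degree $0$. As $\lambda_A+\lambda_B+\lambda_C=1$, the denominator in (\ref{eq.t3}) equals $1$ and the formula returns $\mathfrak X(O)$.

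For the converse (c) $\Rightarrow$ center I would take $g$ as in (c) (assuming, as the quotient implicitly requires, that the symmetrized denominator never vanishes) and define $\mathfrak X$ by (\ref{eq.t3}). To verify equivariance, factor an arbitrary similarity as $T=S_\mu\circ T_0$, where $S_\mu(x)=\mu x$ is scaling about the origin and $T_0$ is an isometry. Then positive homogeneity (degree $k$) and isometry-invariance give $g(T(A),T(O))=g(\mu T_0(A),\mu T_0(O))=\mu^k g(A,O)$. The common factor $\mu^k$ cancels between numerator and denominator of (\ref{eq.t3}), leaving coefficients that sum to $1$, so the affine-invariance of $T$ yields $\mathfrak X(T(O))=T(\mathfrak X(O))$, as required.

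It remains to pass among (c), (b) and (a), which is bookkeeping. The equivalence (c) $\Leftrightarrow$ (b) is the observation that the hypothesis $g(A,B,C)=g(A,C,B)$ in (b) means $g$ ignores the order of its last two arguments, so it depends only on $A$ and the unordered set $\{A,B,C\}=O$; setting $g_{(c)}(A,O):=g_{(b)}(A,B,C)$ (and conversely) is a bijection between admissible weight functions that preserves homogeneity and isometry-invariance and transforms (\ref{eq.t2}) into (\ref{eq.t3}). For (b) $\Leftrightarrow$ (a), isometry-invariance of $g_{(b)}$ means it depends only on the congruence class of the labelled triangle, which, since a triangle is determined up to isometry by its three side lengths, is encoded by $(a,b,c)=(|BC|,|CA|,|AB|)$ ranging over exactly the set $\mathcal D$ of strict triangle-inequality triples; write $g_{(b)}(A,B,C)=G(a,b,c)$. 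Swapping $B\leftrightarrow C$ fixes $a=|BC|$ and interchanges $b$ and $c$, so the symmetry $g(A,B,C)=g(A,C,B)$ becomes $G(a,b,c)=G(a,c,b)$, and homogeneity transfers to $G$. Finally the cyclic arrangement $g(a,b,c),\,g(b,c,a),\,g(c,a,b)$ in (\ref{eq.t1}) assigns to $A,B,C$ the triples whose first entry is the opposite side $a,b,c$ respectively, so it matches (\ref{eq.t2}) vertex by vertex, while the reverse assignment $g_{(b)}(A,B,C):=G(|BC|,|CA|,|AB|)$ recovers (b).

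I expect the main obstacle to be the converse direction, where the equivariance computation hinges on the precise interplay between positive homogeneity and the decomposition $T=S_\mu\circ T_0$: one must check that the exponent $k$ is well defined and that the factor $\mu^k$ genuinely cancels under normalization, which is exactly why the theorem may allow $g$ to be only homogeneous rather than fully similarity-invariant. A secondary point worth flagging is that the weight function in (\ref{eq.t1})--(\ref{eq.t3}) is determined only up to multiplication by an arbitrary positive relabelling-invariant factor, so the assertion is existence of some admissible $g$, not uniqueness; and the standing nonvanishing of each denominator must be carried throughout so that every displayed quotient is legitimate.
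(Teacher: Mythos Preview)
Your proposal is correct and follows essentially the same idea as the paper's proof: both directions rest on the fact that the expressions in (a)--(c) are normalized affine combinations, hence barycentric coordinates, so that equivariance of $\mathfrak X$ is equivalent to the coefficients being the same for $(A,B,C)$ and $(T(A),T(B),T(C))$. The paper merely sketches this, treating the three formulations in parallel, whereas you organize the argument around (c) as a hub and spell out explicitly the factorization $T=S_\mu\circ T_0$ that explains why positive homogeneity (rather than full similarity-invariance) of $g$ suffices; this is a helpful elaboration but not a different method.
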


\begin{proof} Some parts of this proof can be found in \cite{K1,FP,PS} but, for the sake of completeness, let us include a sketch of the main arguments herein.

If $\mathfrak X(\{A,B,C\})$ is defined, for every $\{A,B,C\}\in\mathcal P_3'$, as in any of the Equations \eqref{eq.t1}, \eqref{eq.t2} or \eqref{eq.t3}, then it is a center. The reason is that  these expressions are of the type 
$$\lambda_1 A+\lambda_2B+\lambda_3C,\qquad \text{with }\lambda_1+\lambda_2+\lambda_3=1, $$

\noindent so $(\lambda_1,\lambda_2,\lambda_3)$ are barcentric coordinates in the reference $(A,B,C)$. Finally, note that, in each case and for every similarity $T$,  the coordinates associated to the reference $(T(A),T(B),T(C))$ are equal to those associated to $(A,B,C)$.

On the other hand, suppose that we start with a center $\mathfrak X:\mathcal P_3'\to\mathbb R^2$. For each $\{A,B,C\}\in\mathcal P_3'$, let us consider the barycentric coordinates of $\mathcal X(\{A,B,C\})$ in the reference $(A,B,C)$:
$$\mathcal X(\{A,B,C\})=\lambda_1A+\lambda_2B+\lambda_3C. $$

\noindent In each of the cases described in (a), (b) and (c), we can check that the barycentric coordinates can be considered as functions defined in the corresponding domain $\mathcal D$ and that must satisfy the corresponding conditions and symmetries, provided that $\mathfrak X$ is a center.
\end{proof}

\begin{remark} We omit details but, in the previous theorem, the center is continuous if and only if the corresponding function $g$ (in each case) is continuous.

\end{remark}

In general,  analogues for  the spaces of geometric objects $\mathcal M_m,\mathcal P_m'$ are slightly more complicated, but still possible. The reason is that, in an expression of the type
$$\mathfrak X(O)=\lambda_1V_1+\ldots+\lambda_nV_n,\qquad\lambda_1+\ldots+\lambda_n=1, $$

\noindent the coefficients $(\lambda_1,\ldots,\lambda_n)$ are not \emph{coordinates} (that is, they are not univocally determined). Some results concerning the case $\mathcal P_m'$ can be found  in the manuscript \cite{FP}:

\begin{theorem}[Theorem 2.3 in \cite{FP}] For every element $(V_1,\ldots, V_n)$ in $(\mathbb R^2)^n$, let us consider the matrix $\bm M_1=[d_{ij}]_{1\leq i,j\leq n}$ such that $d_{ij}$ is the distance between $V_i,V_j$ (that is, the corresponding \textbf{distance matrix}). Let $\mathcal D$ denote the set of those matrices corresponding to elements in $\mathcal P_m'$ (formally, to representatives of equivalence classes in $\mathcal P_m'$ since, for the matrix, the choice of the vertex labelled as first one matters, but for the polygon it does not). A function $\mathfrak X:\mathcal P_m'\to\mathbb R^2$ is a center if and only if there exists a function $g:\mathcal D\to\mathbb R$ such that, for every $[(V_1,\ldots, V_n)]\in\mathcal P_m'$,
$$\mathfrak X([(V_1,\ldots, V_n)])=\lambda_1V_1+\ldots+\lambda_nV_n, $$

\noindent where, for $k=1,\ldots, n$, 
\begin{equation}\label{eq.lambda}\lambda_k=\frac{g(\bm M_k)}{g(\bm M_1)+\ldots+g(\bm M_n)},\end{equation}

\noindent {being} $\bm M_k$  the $n\times n$ matrix whose entry $(i,j)$ equals $d_{\rho^{k-1}(i)\rho^{k-1}(j)}$ and $g(\bm M_1)+\ldots+g(\bm M_n)\neq 0$.

\end{theorem}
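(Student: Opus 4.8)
The plan is to prove the two implications separately, following the template of Theorem~\ref{theorem.expression}, while keeping track of the one genuinely new feature: for $n\geq 4$ the vertices $V_1,\dots,V_n$ are affinely dependent, so the coefficients $\lambda_k$ in a representation $\mathfrak X([(V_1,\dots,V_n)])=\sum_k\lambda_kV_k$ with $\sum_k\lambda_k=1$ are no longer uniquely determined (the solution set is an affine subspace of dimension $n-3$). For the \emph{sufficiency} direction I would start from $g$, taken positively homogeneous and symmetric under the reversal of indices induced by $\sigma$, exactly as in the conditions on $g$ in Theorem~\ref{theorem.expression}. First I would check that $\sum_{k=1}^n\lambda_k=1$, so that $\mathfrak X$ is a true affine combination of the vertices. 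Next I would verify that the formula descends to the quotient $\mathcal P_m'$: invariance under the cyclic relabelling $\rho$ is built into \eqref{eq.lambda}, since applying $\rho$ merely permutes the family $\{\bm M_1,\dots,\bm M_n\}$ cyclically and simultaneously permutes the vertices, whereas invariance under the reflection $\sigma$ follows from the assumed symmetry of $g$. Finally, equivariance under $S(n)$ reduces to two facts: a similarity is affine and commutes with an affine combination whose coefficients are held fixed; and the $\lambda_k$ are fixed along each $S(n)$-orbit, because $\bm M_k$ is an isometric invariant and, by positive homogeneity of $g$, the ratio in \eqref{eq.lambda} is unchanged under the rescaling $\bm M_k\mapsto r\bm M_k$ produced by a homothety of ratio $r$.

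For the \emph{necessity} direction I would begin with a center $\mathfrak X$. Because a polygon in $\mathcal P_m'$ is simple with positive side lengths, its vertices are not collinear and hence affinely span $\mathbb R^2$, so for every polygon there do exist coefficients with $\mathfrak X([(V_1,\dots,V_n)])=\sum_k\lambda_kV_k$ and $\sum_k\lambda_k=1$. The goal is to select such coefficients \emph{consistently} across the whole space and then simply set $g(\bm M_k):=\lambda_k$, which makes $\sum_jg(\bm M_j)=1\neq 0$ and renders \eqref{eq.lambda} tautological. To organise the choice I would fix one representative per similarity class, pick a valid coefficient tuple for it, and transport it along the orbit by equivariance, using $\mathfrak X(T(O))=T(\mathfrak X(O))=\sum_k\lambda_kT(V_k)$; this forces $\lambda_k$ to depend only on the shape, that is, on the distance matrix up to scaling, which is exactly what is needed for $g$ to be a well-defined function of $\bm M_k$ (in particular scale-invariant).

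The main obstacle is precisely the non-uniqueness of the $\lambda_k$, which threatens the well-definedness of $g$ on the quotient. Concretely, if the polygon has a nontrivial symmetry group, an isometry $T$ with $T(O)=O$ induces a permutation $\beta$ of the vertices sending $V_k$ to $V_{\beta(k)}$, and two vertices in the same orbit satisfy $\bm M_k=\bm M_{\beta(k)}$; consistency of $g$ then forces $\lambda_k=\lambda_{\beta(k)}$, a constraint that an arbitrary initial tuple will in general violate. I would resolve this by exploiting the very freedom that causes the difficulty: since $T(\mathfrak X(O))=\mathfrak X(O)$, the permuted tuple $(\lambda_{\beta^{-1}(k)})_k$ is again a valid representation, so the finite symmetry group acts affinely on the affine space of admissible coefficient tuples. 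Averaging any admissible tuple over this group produces a symmetric admissible tuple, for which $\lambda_k$ depends only on the symmetry-orbit of $k$, hence only on $\bm M_k$, and which is automatically compatible with the reflection encoded by $\sigma$.

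With this symmetric choice the transport above is well defined (changing $T$ by a symmetry of $O$ permutes the coefficients by $\beta$, which now leaves the symmetric tuple invariant), so $g(\bm M_k):=\lambda_k$ is a genuine function on the domain $\mathcal D$, and \eqref{eq.lambda} holds by construction. The remaining points—that $g$ so defined is positively homogeneous (here of degree zero), that the two descriptions of $\mathcal D$ in terms of representatives agree, and that distinct similarity classes never produce clashing values of $g$—are routine verifications. I expect the averaging step of the previous paragraph to be the conceptual heart of the argument and the only place where the difference from the triangle case of Theorem~\ref{theorem.expression} must be confronted directly.
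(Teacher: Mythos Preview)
The paper does not actually prove this theorem: it is quoted from \cite{FP}, and the only argument offered in the paper is the sketch for the triangle case (Theorem~\ref{theorem.expression}) together with the remark that for $n\geq 4$ the coefficients $(\lambda_1,\ldots,\lambda_n)$ in an affine representation are no longer uniquely determined. Your proposal follows exactly this template and confronts exactly this obstacle, so in spirit it matches what the paper points to; the averaging over the symmetry group to produce a coefficient tuple constant on vertex orbits is a clean way to resolve the non-uniqueness and is presumably close to what \cite{FP} does.

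Two cautions. First, you impose on $g$ positive homogeneity and invariance under the reversal $\sigma$, whereas the theorem as stated in the paper lists no hypotheses on $g$ at all; either the paper has silently dropped conditions present in \cite{FP}, or these constraints are meant to be implicit in the requirement that the formula hold for \emph{every} representative of $[(V_1,\ldots,V_n)]$ at every scale. You should make explicit which reading you are proving. Second, in the necessity direction your averaging handles the case $\bm M_k=\bm M_{k'}$ for two indices of the \emph{same} polygon, but you should also spell out why the ``transport along the orbit'' gives a consistent value of $g$ when $\bm M_k$ of one labelled representative coincides with $\bm M_{k'}$ of another (equal distance matrices force an isometry matching the shifted labellings, which reduces this to the symmetry case you already averaged over), and why the reversal $\sigma$ is covered, since the reversed distance matrix is a conjugate of $\bm M_1$ by a permutation that is not cyclic.
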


The argument in the proof of this theorem, appearing in \cite{FP}, contains ideas that can be applied for the case $\mathcal M_m$ (which is the case $d=0$ of the Open Question 1 below).

\begin{remark} Note, also, that proving the existence of such an expression does not mean that the functions $g$ are easy to obtain. A well known example, that also illustrate the idea of these general problems appearing in many areas of Mathematics,  is the case in Example \ref{example.medoid} for the space of geometric objects $\mathcal M_m$. This example already appears in \cite{PS} and in \cite{B} the reader may find a discussion, {using} other notation, for the fact that it is not easy to find this $g$.

\end{remark}

Up to which point can we extend this principle stating that all the centers can be obtained from a \emph{simple expression}? In particular, we should clarify what is a \emph{simple expression} for each space of geometric objects. For example is also possible to state an analogue (that, up to our knowledge, has not been studied) of Statement (c) in Theorem \ref{theorem.expression} for $\mathcal B_d$:

\medskip

\noindent \textbf{Open Question 1.} Prove the following statement. A function $\mathfrak X:\mathcal B_n\to\mathbb R^n$ is a center if and only if, for $\mathcal D=\{(X,O):O\in\mathcal B_d,X\in O\}$, there exists a function $g:\mathcal D\to\mathbb R$, which is positively homogeneous, invariant with respect to isometries and such that, for all $O\in\mathcal B_d$, we have that:
$$\mathfrak X(O)=\left(\frac{\int_Ox_1g(X,O)dH^d}{\int_O g(X,O)dH^d},\ldots,\frac{\int_Ox_ng(X,O)dH^d}{\int_O g(X,O)dH^d} \right)  $$

\noindent (where $x_1,\ldots, x_n$ denote the corresponding coordinate of each point in $O$). Moreover, $\mathfrak X$ is continuous, if and only if $g$ is continuous (in this case, a discussion of the notion of \emph{continuity} for $\mathfrak X$ {and} $g$ should be included).

\section{Second Question.} \label{sq}

Now we will show a result related to Theorem \ref{theorem.es}  which is valid for any space of geometric objects. This argument, that uses de Axiom of Choice, already appears in  the manuscript \cite{P} but, for the sake of completeness, we will reproduce and generalize it here in our notation:

\begin{theorem} \label{theo.equifacetal} Let $\mathcal O$ be a space of geometric objects with the property that, if the set of objects in $\mathcal O$ fixed by an element $T\in S(n)$ is non-empty, so is the set of points in $\mathbb R^n$ fixed by $T$. Let $O\in \mathcal O$. For every point $P\in\mathbb R^n$, there is a center $\mathfrak X:\mathcal O\to \mathbb R^n$ such that $\mathfrak X(O)=P$ if and only if $P$ is fixed by the group of symmetries of $O$ (that is, for every $T\in G$ such that $T(O)=O$, we have that $T(P)=P$).

\end{theorem}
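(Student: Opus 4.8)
The plan is to prove the two implications separately, with essentially all of the work in the ``if'' direction.

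For necessity, suppose $\mathfrak X$ is a center with $\mathfrak X(O)=P$, and let $T\in S(n)$ be a symmetry of $O$, i.e. $T(O)=O$. Equivariance gives
$$T(P)=T(\mathfrak X(O))=\mathfrak X(T(O))=\mathfrak X(O)=P,$$
so $P$ is fixed by the group $G$ of symmetries of $O$. This direction uses neither the standing hypothesis on $\mathcal O$ nor the Axiom of Choice.

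For sufficiency, assume $P$ is fixed by $G$ and let me construct a center with $\mathfrak X(O)=P$. First I would partition $\mathcal O$ into its $S(n)$-orbits and, using the Axiom of Choice, fix one representative per orbit, taking $O$ itself as the representative of its own orbit. To each representative $O'$ I must attach a point $P'\in\mathbb R^n$ fixed by its stabilizer $\mathrm{Stab}(O')=\{T\in S(n):T(O')=O'\}$; for the orbit of $O$ I take $P'=P$, which is admissible precisely because $P$ is $G$-fixed. Having made these choices, I define $\mathfrak X(T(O')):=T(P')$ for each object $T(O')$ in the orbit of $O'$. The point to verify is well-definedness: if $T(O')=S(O')$ then $S^{-1}T\in\mathrm{Stab}(O')$ fixes $P'$, whence $T(P')=S(P')$. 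Equivariance is then immediate, since $\mathfrak X(U(T(O')))=UT(P')=U(\mathfrak X(T(O')))$, and by construction $\mathfrak X(O)=\mathrm{id}(P)=P$.

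This reduces everything to a single lemma: each representative's stabilizer must admit a common fixed point in $\mathbb R^n$. Here the standing hypothesis enters. Since every $T\in\mathrm{Stab}(O')$ fixes the object $O'$, the hypothesis guarantees that $T$ fixes some point of $\mathbb R^n$; that is, every element of $\mathrm{Stab}(O')$ is \emph{elliptic}. So the crux is to show that a subgroup of $S(n)$ all of whose elements have a fixed point in $\mathbb R^n$ has a common fixed point. I would split on the similarity ratio. If the stabilizer contains an element $g_0$ of ratio $\lambda\neq 1$, then $g_0$ has a \emph{unique} fixed point $p_0$ (because $I-\lambda A$ is invertible when $\lambda\neq 1$), and I would argue that every other element fixes $p_0$ as well: otherwise a suitable commutator with $g_0$ has ratio $1$ but nonzero translation part, i.e. is a nontrivial translation, contradicting ellipticity. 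If instead the stabilizer consists only of isometries, I would show its orbits are bounded and take the center of the smallest enclosing ball of an orbit (in the spirit of the $1$-center $\mathfrak F$), which any isometry permuting that orbit must fix by uniqueness of the ball.

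I expect this lemma, and within it the pure-isometry case, to be the main obstacle: extracting a common fixed point from merely elementwise ellipticity is delicate when the stabilizer is noncompact, and proving that an all-elliptic group of isometries has bounded orbits is the genuinely nontrivial step. It is worth emphasizing that for \emph{bounded} objects the difficulty disappears, since then every symmetry has ratio $1$ and fixes the center of the smallest enclosing ball of $O'$ itself; the whole subtlety lies in accommodating unbounded objects, which is exactly what the elementwise hypothesis on $\mathcal O$ is designed to control.
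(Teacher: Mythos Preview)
Your argument follows the same architecture as the paper's: necessity by a one-line equivariance computation, sufficiency by choosing orbit representatives via the Axiom of Choice, assigning to each representative a point fixed by its stabilizer, and propagating by $\mathfrak X(T(O'))=T(P')$, with well-definedness checked via the stabilizer. That is exactly what the paper does.

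Where you diverge is in rigor, and in your favor. The paper, when it extends $\mathfrak X$ to the remaining orbits, simply introduces the stabilizers $G_i$ and asserts that their common fixed-point sets $\mathcal X_i\subset\mathbb R^n$ are ``never empty,'' then chooses a point from each. But the standing hypothesis is stated only for single elements $T\in S(n)$, not for subgroups, so the passage from ``every $T\in\mathrm{Stab}(O')$ has a fixed point'' to ``$\mathrm{Stab}(O')$ has a \emph{common} fixed point'' is used without justification in the paper. You correctly isolate this as the crux and sketch an argument (split on the ratio; unique fixed point of a strict similarity plus a commutator trick in one case, bounded orbits and the smallest enclosing ball in the pure-isometry case). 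Your sketch is not complete---the commutator you describe need not be a translation since its linear part is $ABA^{-1}B^{-1}$ rather than the identity, so that step needs more care, and you rightly flag the bounded-orbit claim as the genuinely delicate point---but you have done more than the paper on this issue. In short: same route, and you have put your finger on a gap the paper passes over in silence.
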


\begin{proof} It is clear that, for every center $\mathfrak X$, for every $O\in\mathcal O$, if $\mathfrak X(O)=P$, then $P$ is fixed by the group of symmetries of $O$. This is a consequence of the fact that $\mathfrak X$ is equivariant. For every $T\in S(n)$ such that $T(O)=O$, we have:
$$T(P)=T(\mathfrak X(O))=\mathfrak X(T(O))=\mathfrak X(O)=P. $$

Suppose now that $P$ is a point fixed by the group of symmetries of $O$. Let $\mathcal O'$ be the orbit of $O$ in $\mathcal O$, that is, the set of elements $\{T(O):T\in S(n)\}$. Let us begin defining $\mathfrak X$ for the elements in $\mathcal O'$. Let us impose that $\mathfrak X(O)=P$.  For any $O'\in \mathcal O'$ ($O'$ may equal $O$), there exists at least one $T\in G$ such that  $T(O)=O'$, so let us define
$$\mathfrak X(O')=T(\mathfrak X( O))=T(P).$$

\noindent We need to check that this function is well defined. Suppose that there is another element $T'\in S(n)$ such that $T'(O)=O'$. Then, we need that
$$\mathfrak X( O')=T(\mathfrak X( O))=T'( \mathfrak X(O)).$$

\noindent Note that
\begin{align*}
T(O)=T'(O)\Leftrightarrow O=T^{-1}(T'(O)), \\
T(\mathfrak X(O))=T'(\mathfrak X(O))\Leftrightarrow P=T^{-1}(T'(P)).
\end{align*}

\noindent According to the first statement in this proof, {$O=T^{-1}(T'(O))$ implies $ P=T^{-1}(T'(P))$ and so $\mathfrak X\mid_{\mathcal O'}$ is well defined.}

If the action of $S(n)$ over $\mathcal A$ were {transitive} (that is, $\mathcal O=\mathcal O'$) then we would have finished. We would have also finished if there were a \emph{simple center} $\mathfrak C':\mathcal O\to\mathbb R^n$ (such as the centroid) defined in $\mathcal O$ (in this case, for every $O\in\mathcal O\setminus\mathcal O'$, we just impose $\mathfrak X(O)=\mathfrak C(O)$). In other case, to extend $\mathfrak X$ to $\mathcal O$, we need to use the \emph{Axiom of Choice} as follows:

\begin{itemize}

\item Let $\{G_i\}_{i\in I}$ denote the set of subgroups of $S(n)$ with a non-empty set of fixed points in $\mathcal O$. Consider also the set $\{\mathcal X_i\}_{i\in I}$, such that $\mathcal X_i$ is the set of points in $\mathbb R^n$ fixed by $G_i$. Let $\Phi_1$ be a {choice function} that selects an element $P_i$ from each $\mathcal X_i$, for $i\in I$ ($\mathcal X_i$ is never empty).

\item $\mathcal O\setminus \mathcal O'$ can be split into a disjoint union of orbits $\mathcal O\setminus \mathcal O'=\bigcup_{j\in J}\mathcal O_j$. Let $\Phi_2$ be a choice function that selects one element $O_j$ in each orbit $\mathcal O_j$. 

\item If $O$ belongs to the orbit $\mathcal O_j$, then $O=T(O_j)$ for some $T\in S(n)$. If $G_i$ is the stabilizer of $O_j$, for some $i\in I$, we can define
$$\mathfrak X(O)=\mathfrak X(T(O_j))=T(\mathfrak X(O_j))=T(P_i). $$

\noindent The same argument used before shows that this function is well defined and it is equivariant.

\end{itemize}

 \end{proof}

Obviuosly, for the cases $\mathcal O=\mathcal M_m,\mathcal P_m$, the center constructed in the previous proof may fail to be continuous. The problem is still open for continuous centers. This is related (it is more general) to the \emph{Center Conjecture for Equifacetal Simplices} proposed by A. Edmonds (see \cite{E}).

\medskip

\noindent \textbf{Open Question 2.} For $\mathcal O=\mathcal M_m,\mathcal P_m,\mathcal B_d$, decide if, for every space of geometric objects $\mathcal O$ and for every $O\in\mathcal O$, there is a continuous center $\mathfrak X:\mathcal O\to\mathbb R^n$ such that $\mathfrak X(O)=P$ if and only if $P$ is fixed by the group of symmetries of $O$ (that is, for every $T\in S(n)$ such that $T(O)=O$, we have that $T(P)=O$).

\medskip

Another family of statements of Theorem \ref{theorem.es} are the following:

\medskip

\noindent \textbf{Open Question 3.} For two given centers $\mathfrak X_1,\mathfrak X_2:\mathcal O\to\mathbb R^n$, what can we say of $O$ if we know that $\mathfrak X_1(O)=\mathfrak X_2(O)$?

\medskip

\noindent \textbf{Open Question 4.} For particular choices of family of objects $\mathcal O'\subset\mathcal O$, which are closed under the action of $S(n)$, does it exist a family of centers $\mathfrak X_1,\ldots,\mathfrak X_k:\mathcal O\to\mathbb R^n$ such that, for a given  $O\in\mathcal O$, we have that $\mathcal O\in\mathcal O'$ if and only if $\mathfrak X_1(O)=\ldots=\mathfrak X_k(O)$?

\medskip

The answer to these questions depends heavily on the choice of the object space. It is interesting for all of them, but it is only known for very special cases:

\begin{itemize}

\item {For $\mathcal O$ being the set of triangles $\mathcal P_3'$, Statement (a) in Theorem \ref{theorem.es} provides some answers in a particular case. For $\mathfrak X_1$ being the incenter, $\mathfrak X_2$ being the orthocenter and $\mathcal O'$ being the set of equilateral triangles, $\mathfrak X_1(O)=\mathfrak X_2(O)$ if and only if $O\in\mathcal O'$. There exist other results, similar to Theorem \ref{theorem.es}, in the literature.
}

\item {With respect to Open Question 3 and for the case $\mathcal O=\mathcal P_4'$, in Theorems 3.1, 3.3 and 3.4 of \cite{AHK} we can find some interesting examples.
}

\item {In relation to Open Question 4 and for $\mathcal O=\mathcal P_m'$, for $m\geq 3$, we may find some examples in Theorems 16 and 17 of \cite{PS}.}

\end{itemize}

In relation to the previous discussion and for the spaces of geometric objects $\mathcal M_m,\mathcal P_m'$, we have the following:

\medskip

\noindent \textbf{Open Question 5.} For two multiset (resp. polygon) centers $\mathfrak X_1,\mathfrak X_2$ described by an expression analogue to those  in Theorem \ref{theorem.expression}, how can we compute the set of multisets (resp. polygons) satisfying $\mathfrak X_1(O)=\mathfrak X_2(O)?$

\medskip

Note that this problem is very simple for $\mathcal P_3'$: in this case, the coincidence of the centers implies coincidence of the barycentric coordinates appearing in the expression. See, as an example, the following proof of an analogue of Statement (a) in Theorem \ref{theorem.es}:

\begin{theorem} {Let $\mathfrak X$ and $\mathfrak C$ denote, respectively, the Nagel point and the centroid, both defined in $\mathcal P_3'$. For a triangle $\{A,B,C\}\in\mathcal P_3'$, the Nagel point $\mathfrak X(\{A,B,C\})$ coincides with the centroid $\mathfrak C(\{A,B,C\})$ if and only if the triangle $\{A,B,C\}$ is equilateral.}

\end{theorem}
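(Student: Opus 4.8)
The plan is to exploit the rigidity provided by Theorem \ref{theorem.expression}(a): for a genuine (non-degenerate) triangle $\{A,B,C\}\in\mathcal P_3'$ the three vertices are affinely independent, so the barycentric coordinates of any point with respect to the reference $(A,B,C)$ are \emph{unique}. Consequently, two centers coincide at $\{A,B,C\}$ if and only if their normalized barycentric coordinates agree. First I would record these coordinates for the two centers in question. The centroid $\mathfrak C$ corresponds (in the notation of Equation \eqref{eq.t1}) to the choice $g\equiv 1$, so its coordinates are $(1/3,1/3,1/3)$. The Nagel point $\mathfrak X$ has the well-known barycentric representation $(s-a:s-b:s-c)$, where $a=BC$, $b=CA$, $c=AB$ and $s=(a+b+c)/2$ is the semiperimeter; equivalently it arises from $g(a,b,c)=b+c-a$, which indeed is positively homogeneous and symmetric in its last two arguments, as required.

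Next I would normalize. Since $(s-a)+(s-b)+(s-c)=3s-(a+b+c)=s>0$ (the perimeter being positive on $\mathcal P_3'$), the denominator is harmless and the normalized Nagel coordinates are $\bigl((s-a)/s,\,(s-b)/s,\,(s-c)/s\bigr)$. By uniqueness of barycentric coordinates, $\mathfrak X(\{A,B,C\})=\mathfrak C(\{A,B,C\})$ is equivalent to the coordinatewise equalities $(s-a)/s=(s-b)/s=(s-c)/s=1/3$. Each of these forces $s-a=s-b=s-c$, hence $a=b=c$, i.e.\ the triangle is equilateral. The converse direction is immediate: if $a=b=c$ then $s-a=s-b=s-c$, so the Nagel coordinates are $(1:1:1)$, coinciding with those of the centroid.

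I do not anticipate a genuine obstacle here, since the argument is precisely the simple case flagged in the remark preceding the statement (coincidence of centers on $\mathcal P_3'$ reduces to coincidence of barycentric coordinates). The only point requiring care—the step I would treat most explicitly—is justifying the barycentric representation $(s-a:s-b:s-c)$ of the Nagel point and confirming that it fits the hypotheses of Theorem \ref{theorem.expression}(a), so that the uniqueness of barycentric coordinates may be invoked legitimately rather than merely computing with an unverified formula. Once that representation is in hand, the equivalence follows from elementary algebra with no further subtlety.
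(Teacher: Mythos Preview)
Your proposal is correct and follows essentially the same route as the paper: write the Nagel point and the centroid in normalized barycentric coordinates (the paper uses $(b+c-a)/(a+b+c)$ etc., which is identical to your $(s-a)/s$), invoke uniqueness of barycentric coordinates for an affinely independent reference, and solve the resulting system to obtain $a=b=c$. The paper simply cites Kimberling's Encyclopedia for the Nagel point's coordinates rather than deriving them, which matches your remark that this is the only step needing external justification.
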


\begin{proof} According to \cite{K}, we have that 
\begin{align*}
\mathfrak X(A,B,C) & =\frac{b+c-a}{a+b+c}A+\frac{a+c-b}{a+b+c}B+\frac{a+b-c}{a+b+c}C, \\
\mathfrak C(\{A,B,C\}) & =\frac{1}{3}A+\frac{1}{3}B+\frac{1}{3}C,
\end{align*}

\noindent where $a,b,c$ denote the length of the sides of $\{A,B,C\}$.

Since
\begin{equation}\label{eq.magdalena}\mathfrak X(\{A,B,C\})=\mathfrak C(\{A,B,C\})\Leftrightarrow \begin{cases}\frac{b+c-a}{a+b+c}=\frac{1}{3}\\\frac{a+c-b}{a+b+c}=\frac{1}{3}\\\frac{a+b-c}{a+b+c}=\frac{1}{3}\end{cases}\end{equation}

\noindent we can see that the only possibility for $\mathfrak X(\{A,B,C\})=\mathfrak C(\{A,B,C\})$ is $a=b=c$.

\end{proof}

The approach followed by Kimberling allows us to produce results like the previous one, with an algebraic proof and with no geometric intuition for $\mathcal P_3'$. But it is not so easy to perform this kind of arguments for other spaces of objects. For example, for $\mathcal P_4'$, for centers with an expression as the one in Equation \eqref{eq.lambda}, the analogue for the statement in Equation \eqref{eq.magdalena} does not hold (because the coefficients $\lambda_1,\lambda_2,\lambda_3,\lambda_4$ appearing in Equation \eqref{eq.lambda} are not coordinates). In this case, Equation \eqref{eq.magdalena} leads to some curious equations of the type
$$\mu_1A+\mu_2B+\mu_3C+\mu_4D=0,\qquad \mu_1+\mu_2+\mu_3+\mu_4=0, $$

\noindent (where $\mu_1,\mu_2,\mu_3,\mu_4$ are functions of $(A,B,C,D)$ satisfying some symmetries) that also define a family of elements in $\mathcal P_4'$  (in a similar fashion to those studied in \cite{BS}). We hope to study this in some further work.

\section{Central Objects} \label{co}

{There are many other constructions that share some properties with centers: they are functions defined over a space of objects $\mathcal O$, that commute with the action of $S(n)$, but whose image may be different from $\mathbb R^n$. We will call them, informally, \emph{central objets}. In the following, we may find some examples. These constructions are as interesting as centers, but are beyond the scope of this work.
}

\begin{exam} Let us start with \textbf{central lines.} These objects are functions $\mathfrak L$ whose codomain is the set of lines in $\mathbb R^2$. Kimberling already studied these objects for the space of triangles in \cite{K2} and some information for the space of $n$-gons can be found in \cite{FP}.

Note that, for any set of objects $\mathcal O$ and for any two centers $\mathfrak X_1,\mathfrak X_2:\mathcal O\to\mathbb R^n$ such that for all $O\in\mathcal O$, $\mathfrak X_1(O)\neq \mathfrak X_2(O)$, we can define a central line by imposing  $\mathfrak L(O) $ to be the line passing through $\mathfrak X_1(O), \mathfrak X_2(O)$.

The converse, in general, is not true. In \cite{FP} the following example is discussed in detail: if $\mathcal O$ is the set of rectangles that are not squares, we can define a central line $\mathfrak L$, but, on the other hand, all the centers defined in $\mathcal O$ must coincide (according to Theorem \ref
{theo.equifacetal} in the present paper), so $\mathfrak L$ cannot be defined as the line passing through two centers.

\end{exam}

\begin{exam} We also have \textbf{central automorphisms.} In this case, these objects are functions $\mathfrak P:\mathcal O\to\mathcal O$. For the case $\mathcal O=\mathcal P_m$, some aspects have been studied in \cite{E}. The most illustrative example is $\mathfrak P:\mathcal P_m'\to\mathcal P_m'$ given by $[(V_1,\ldots, V_n)]\longmapsto [(V_1',\ldots, V_n')]$, where, for $i=1,\ldots, n$, $V_i'$ is the midpoint of $V_i,V_{i+1}$ ($V_{n+1}$ is considered to be $V_1$).

Of course, central automorphisms lead us to a lot of interesting problems, such as the existence of fixed points, sequences of objects defined {by} iterating a given central automorphism and many others.
\end{exam}

%







\bigskip
\bigskip
\begin{minipage}[t]{10cm}
\begin{flushleft}
\small{
\textsc{M. Magdalena Martínez-Rico}
\\*e-mail: magdalenamartinez@educa.madrid.org
\\[0.4cm]
\textsc{L. Felipe Prieto-Martínez}
\\*Universidad Politécnica de Madrid,
\\*Escuela Superior de Arquitectura de Madrid
\\*Av. Juan de Herrera 4
\\* Madrid, 28040, Spain
\\*e-mail: luisfelipe.prieto@upm.es
\\[0.4cm]
\textsc{Raquel Sánchez-Cauce}
\\*e-mail: r.cauce@gmail.com
}
\end{flushleft}
\end{minipage}

\end{document}